\def\L8{L_\infty}
\newtheorem{thm}{Theorem}
\newtheorem{prp}[thm]{Proposition}
\newtheorem{cor}[thm]{Corollary}
\newtheorem{lmm}[thm]{Lemma}
\newtheorem{dfn}[thm]{Definition}
\title[]{Center of mass and K\"ahler structures}
\author[S.~Wilson]{Scott O. Wilson}
  \address{Scott O. Wilson, Department of Mathematics, Queens College, City University of New York, 65-30 Kissena Blvd., Flushing, NY 11367}
  \email{scott.wilson@qc.cuny.edu}
\author[M.~Zeinalian]{Mahmoud~Zeinalian}
  \address{Mahmoud Zeinalian, Department of Mathematics, LIU Post, Long Island University, 720 Northern Boulevard, Brookville, NY 11548, USA} 
  \email{mzeinalian@liu.edu}
\keywords{K\"ahler, almost hermitian, holonomy, center of mass}
\subjclass[2010]{53C55, 32Q15, 53C29}
\begin{document}

\begin{abstract} 
There is a sequence of positive numbers $\delta_{2n}$, such that for any connected $2n$-dimensional Riemannian manifold $M$, there are two mutually exclusive possibilities:
\begin{enumerate}
\item[]
\item There is a complex structure on $M$ making it into a K\"ahler manifold.
\item For any almost complex structure $J$ compatible with the metric, at every point $p\in M$, there is a smooth loop $\gamma$ at $p$ such that $$dist(J_p, hol_\gamma^{-1}J_phol_\gamma)> \delta_{2n}.$$ 
\item[]
\end{enumerate}

\end{abstract}
\maketitle

\section{Introduction}

K\"ahler manifolds possess a tremendous amount of interesting structure, and therefore have several equivalent characterizations. It has been a focus of much research to determine conditions under which manifolds do (or do not) admit a K\"ahler structure. This short note shows that the holonomy action on the space of almost hermitian structures determines two mutually exclusive cases, according to whether there is a structure that is \emph{nearly preserved} at some point, by proving that any manifold with a nearly preserved almost hermitian structure at some point in fact admits a K\"ahler structure. The novel idea is to use a center of mass argument, averaging a given almost complex structure at a point over the holonomy action, and parallel transporting the result to obtain a global K\"ahler structure.

Due to the plethora of topological implications for having a K\"ahler structure, one may deduce several interesting consequences that do not mention the K\"ahler condition at all. For example, we deduce by \cite{DGMS} that any almost hermitian manifold which has a non-trivial Massey product (or more generally is not formal) also has a holonomy action on the space almost hermitian structures which is bounded away from the identity in a way that is precisely quantifiable.

The authors would like to thank Dennis Sullivan for conversations about this.

%\begin{ack}
%\end{ack}

\section{Main result}
For a $2n$-dimensional real vector space $V$ with an inner product $g$, let 
\[
\mathbb{J}(V, g)=\{J: V \to V | \, J^2=-id, \, g\left(Ju, Jv \right)=g\left(u, v\right)\}
\] 
denote the space of metric almost complex structures on $V$.
\begin{lmm}
 $\mathbb{J}(V, g)$ is a compact smooth manifold. The tangent space at a point $J$ is  
 \[
 T_J\mathbb{J}(V, g)=\{\phi: V\to V~|~ \phi J=-J\phi \, \, \, \textrm{and} \, \, \,  \phi^\dagger= -\phi\}\]
 and has a bilinear form $\tilde{g}(\phi, \psi)=tr(\phi\psi^\dagger)$, for all $\phi, \psi \in T_J\mathbb{J}(V, g)$, where $\psi^\dagger$ denotes the adjoint of $\psi$. This makes $\mathbb{J}(V, g)$ into a Riemannian manifold on which $O(V)$ acts transitively by isometries. 

\end{lmm}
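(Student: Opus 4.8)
The plan is to realize $\mathbb{J}(V,g)$ as a homogeneous space. Let $O(V)$ act on endomorphisms of $V$ by conjugation, $A\cdot J = AJA^{-1}=AJA^\dagger$. If $J\in\mathbb{J}(V,g)$ then $(AJA^{-1})^2=-\id$ and $g(AJA^{-1}u,AJA^{-1}v)=g(JA^{-1}u,JA^{-1}v)=g(u,v)$, so the action preserves $\mathbb{J}(V,g)$. To see it is transitive, I would use the standard adapted-basis construction: for a unit vector $e$ one has $g(e,Je)=g(Je,J^2e)=-g(e,Je)=0$ and $Je$ a unit vector, the plane $\mathrm{span}(e,Je)$ is $J$-invariant, and its $g$-orthogonal complement is $J$-invariant as well, so induction produces a $g$-orthonormal basis $e_1,Je_1,\dots,e_n,Je_n$. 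Any two such bases (for two elements $J,J'$) differ by an element of $O(V)$, which then conjugates $J$ to $J'$. Fixing a base point $J_0$ with stabilizer the closed subgroup $\{A\in O(V): AJ_0=J_0A\}\cong U(n)$, we get $\mathbb{J}(V,g)\cong O(2n)/U(n)$, which is a compact smooth manifold.

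Next I would compute the tangent space by differentiating the two defining equations along a smooth path $J_t$ with $J_0=J$ and $\dot J_0=\phi$. From $J_t^2=-\id$ we get $\phi J + J\phi =0$; from $g(J_tu,J_tv)=g(u,v)$, i.e. $J_t^\dagger J_t=\id$, we get $\phi^\dagger J + J^\dagger\phi=0$. The key observation is that $J\in\mathbb{J}(V,g)$ forces $J^\dagger = J^{-1}=-J$, so the second relation becomes $\phi^\dagger J = J\phi = -\phi J$, whence $\phi^\dagger=-\phi$. This gives the inclusion of the left-hand side of the displayed tangent-space formula into the set of skew-adjoint endomorphisms anticommuting with $J$. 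For the reverse inclusion I would use the infinitesimal action $\xi\mapsto [\xi,J]=\xi J - J\xi$ of $\mathfrak{o}(V)$: one checks directly that $[\xi,J]$ is skew-adjoint and anticommutes with $J$, and that any $\phi$ with $\phi^\dagger=-\phi$, $\phi J=-J\phi$ equals $[\xi,J]$ for $\xi=\tfrac12 J\phi$ (which is skew-adjoint since $\phi$ anticommutes with $J$). Hence equality holds; equivalently this identifies $T_J\mathbb{J}(V,g)$ with the complement $\mathfrak{m}$ of $\mathfrak{u}(n)$ in $\mathfrak{o}(2n)$, consistent with the homogeneous-space description.

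Finally, the bilinear form $\tilde g(\phi,\psi)=\tr(\phi\psi^\dagger)$ is the restriction to each tangent space of the fixed Frobenius inner product on $\mathrm{End}(V)$, hence depends smoothly on $J$; it is symmetric since $\tr(\phi\psi^\dagger)=\tr(\psi^\dagger\phi)=\tr((\phi^\dagger\psi)^\dagger)=\tr(\phi^\dagger\psi)=\tr(\psi\phi^\dagger)$ over $\RR$, and positive-definite because $\tilde g(\phi,\phi)=\tr(\phi\phi^\dagger)$ is the squared Hilbert--Schmidt norm of $\phi$. For invariance under $O(V)$, note the derivative of conjugation by $A$ sends $\phi\mapsto A\phi A^\dagger$, and $\tr(A\phi A^\dagger(A\psi A^\dagger)^\dagger)=\tr(A\phi\psi^\dagger A^\dagger)=\tr(\phi\psi^\dagger)$; combined with transitivity, $O(V)$ acts transitively by isometries. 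The only genuinely delicate point is upgrading the evident inclusion in the tangent-space computation to an equality and getting the smooth-manifold/compactness statements simultaneously; both are resolved cleanly by the $O(2n)/U(n)$ picture once transitivity is in hand.
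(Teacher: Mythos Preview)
Your proof is correct and follows essentially the same approach as the paper: identify $\mathbb{J}(V,g)$ with the homogeneous space $O(V)/U(V)$ via the conjugation action, and observe that the induced action on tangent vectors is again conjugation, so the trace form is invariant. You simply fill in considerably more detail than the paper does---the adapted-basis argument for transitivity, the explicit differentiation of the defining equations, the surjectivity of $\xi\mapsto[\xi,J]$ onto the claimed tangent space, and the verification that $\tilde g$ is a genuine Riemannian metric---none of which the paper spells out.
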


\begin{proof}  $\mathbb{J}(V, g)$ can be identified as the $O(V)$-homogenous space $O(V)/U(V)$ where $U(V)$ is defined using any fixed $J$ on $V$, and the action of $O(V)$ on  $\mathbb{J}(V, g)$ is given by conjugation.
 Therefore $\mathbb{J}(V, g)$ is a smooth manifold. The induced action of $O(V)$ on $T_J\mathbb{J}(V, g)$ is also given by conjugation, so that trace is invariant.
\end{proof}

\begin{lmm} \label{lmm;WD}
For any two metric vector spaces $(V, g)$ and $(W, h)$ of the same dimension, the Riemannian manifolds $\mathbb{J}(V, g)$ and  $\mathbb{J}(W, h)$  are isometric. In particular, for a vector space $V$ and two metrics $g$ and $g'$, the Riemannian manifolds $\mathbb{J}(V, g)$ and  $\mathbb{J}(V, g')$ are isometric. 
\end{lmm}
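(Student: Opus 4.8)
The plan is to build the isometry by transporting all the relevant structure along a linear isometry of the underlying vector spaces. First I would choose a linear isomorphism $T\colon V\to W$ satisfying $h(Tu,Tv)=g(u,v)$ for all $u,v\in V$; such a $T$ exists because $\dim V=\dim W$, so one may send a $g$-orthonormal basis of $V$ to an $h$-orthonormal basis of $W$. Equivalently, the adjoint $T^{\dagger}\colon W\to V$, defined by $h(Tu,w)=g(u,T^{\dagger}w)$, satisfies $T^{\dagger}=T^{-1}$. I then define $\Phi\colon\mathbb{J}(V,g)\to\mathbb{J}(W,h)$ by $\Phi(J)=TJT^{-1}$.

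The first step is to check that $\Phi$ is well defined and bijective: from $J^2=-\id$ we get $(TJT^{-1})^2=-\id$, and from $g(Ju,Jv)=g(u,v)$ together with $T^{\dagger}=T^{-1}$ one gets $h(TJT^{-1}w_1,\,TJT^{-1}w_2)=h(w_1,w_2)$, so $\Phi(J)\in\mathbb{J}(W,h)$; the inverse is $J'\mapsto T^{-1}J'T$. Since $\Phi$ is the restriction of a linear map $\mathrm{End}(V)\to\mathrm{End}(W)$, it is smooth with differential $d\Phi_J(\phi)=T\phi T^{-1}$, and this lands in $T_{\Phi(J)}\mathbb{J}(W,h)$ since it anticommutes with $TJT^{-1}$ and, by the identity below, is $h$-skew.

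The key computation is the behaviour of adjoints under conjugation by an isometry: for any linear $\psi\colon V\to V$, the $h$-adjoint of $T\psi T^{-1}$ equals $T\psi^{\dagger}T^{-1}$, where $\psi^{\dagger}$ is the $g$-adjoint. This is immediate from
\[
h(T\psi T^{-1}w_1,w_2)=g(\psi T^{-1}w_1,T^{-1}w_2)=g(T^{-1}w_1,\psi^{\dagger}T^{-1}w_2)=h(w_1,T\psi^{\dagger}T^{-1}w_2).
\]
Combined with cyclicity of the trace, this gives, for $\phi,\psi\in T_J\mathbb{J}(V,g)$,
\[
\tilde h\bigl(d\Phi_J\phi,\,d\Phi_J\psi\bigr)=\tr\bigl(T\phi T^{-1}(T\psi T^{-1})^{\dagger}\bigr)=\tr\bigl(T\phi\psi^{\dagger}T^{-1}\bigr)=\tr(\phi\psi^{\dagger})=\tilde g(\phi,\psi),
\]
so $\Phi$ is an isometry. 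The ``in particular'' clause is the special case $W=V$, $h=g'$.

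There is essentially no serious obstacle here; the only point requiring care is keeping the two inner products straight when manipulating adjoints. One may also remark that $T$ is far from canonical — any two choices differ by an element of $O(V)$, which by the previous lemma acts on $\mathbb{J}(V,g)$ by isometries — so the non-uniqueness is harmless, and the isometry class of $\mathbb{J}(V,g)$ depends only on $\dim V$.
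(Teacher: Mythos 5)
Your proof is correct and follows the same route as the paper, which simply says to take a linear isometry $f\colon (V,g)\to(W,h)$ (via Gram--Schmidt) and conjugate by it; you have carried out in full the verifications that the paper leaves implicit, including the key identity $(T\psi T^{-1})^{\dagger}=T\psi^{\dagger}T^{-1}$ needed to see that $\tilde g$ is preserved.
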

\begin{proof} The Gramm-Schmidt process ensures that there is a linear isometry $f: (V, g)\to (W, h)$. Conjugation by this isometry gives the desired Riemannian isometry.
\end{proof}

For an argument below, we will require convex balls in $\mathbb{J}(V, g)$ for which is there is a well defined notion of center of mass.  For any Riemannian manifold $M$, balls of radius $r$ are convex if
\[
r = \textrm{min} \Big\{ \frac{ \textrm{inj} \, M }{2}, \frac{\pi}{ 2 \sqrt \epsilon} \Big\},
\]
where $inj \, M$ denotes the injectivity radius of $M$, and $\epsilon$ is a finite positive upper bound on the sectional curvature (c.f. \cite{Ch} Prop IX.6.1). Also, for any such $\epsilon$, we have the following theorem.

\begin{thm}[Karcher, \cite{K}] \label{thm;K}
Let $f: X \to M$ be a measurable map from a probability space  $(X,m)$ to a Riemannian manifold $M$.
If $f(X)$ is contained in a convex subset $B$ of $M$, with diameter less than or equal to $\pi / 2 \sqrt \epsilon$,
then there is a unique center of mass in $B$, defined by the minimum of 
\[
E(y) = \frac{1}{2} \int_X d^2\left(f(x) , y\right) \, m(x).
\]
\end{thm}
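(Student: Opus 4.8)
The plan is to prove existence and uniqueness of the minimizer of $E$ separately, with the common analytic engine being the (strict) geodesic convexity on $B$ of each squared‑distance function $y\mapsto \tfrac12 d^2(p,y)$, $p\in B$, which is exactly where the curvature bound $\sec\le\epsilon$ and the diameter bound $\pi/2\sqrt\epsilon$ enter. Concretely, the Hessian of $\tfrac12 d^2(p,\cdot)$ at a point $q$ decomposes, relative to the radial direction $\partial_r$ of the geodesic from $p$, as $1$ in the radial direction (this is curvature‑independent) and, on the orthogonal complement, is bounded below via Jacobi field (Rauch) comparison by $\theta\sqrt\epsilon\cot(\theta\sqrt\epsilon)\,g$ with $\theta=d(p,q)$; since $\theta\le\pi/2\sqrt\epsilon$ this lower bound is non‑negative, and positive when $\theta<\pi/2\sqrt\epsilon$. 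I would isolate this Hessian comparison as the key lemma; everything else is soft.

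For existence: since $B$ is convex with $\mathrm{diam}(B)\le\pi/2\sqrt\epsilon<\infty$ in a complete manifold, $\bar B$ is compact by Hopf–Rinow, and $E$ is continuous (the integrand is bounded on $\bar B\times\bar B$, so dominated convergence applies), hence $E$ attains its infimum on $\bar B$ at some $y_*$. To see $y_*\in B$ rather than on $\partial B$, I would use the first variation: because $B$ lies in a convex ball, $\nabla_y \tfrac12 d^2(p,\cdot)=-\exp_y^{-1}(p)$ is well defined, so $\nabla_y E=-\int_X \exp_y^{-1}(f(x))\,m(x)$. If $y_*\in\partial B$, each vector $\exp_{y_*}^{-1}(f(x))$ points from the boundary point $y_*$ toward a point $f(x)\in\bar B$, so by convexity its inner product with the outward normal is $\le 0$, and strictly negative on a positive‑measure set; thus $E$ strictly decreases when moving inward, contradicting minimality. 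Hence $y_*\in B$ and $\nabla_{y_*}E=0$.

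For uniqueness: suppose $y_0\ne y_1$ in $B$ both minimize $E$. By convexity there is a (unique) minimizing geodesic $\gamma:[0,1]\to B$ from $y_0$ to $y_1$, lying entirely in $B$. For each fixed $p=f(x)$, the distances $d(p,\gamma(t))$ are $\le\mathrm{diam}(B)\le\pi/2\sqrt\epsilon$, so by the Hessian comparison above $\tfrac{d^2}{dt^2}\tfrac12 d^2(p,\gamma(t))\ge \langle\dot\gamma(t),\partial_r\rangle^2 + \theta\sqrt\epsilon\cot(\theta\sqrt\epsilon)|\dot\gamma(t)_{\perp}|^2>0$, i.e. $t\mapsto\tfrac12 d^2(p,\gamma(t))$ is strictly convex; integrating over $X$ makes $t\mapsto E(\gamma(t))$ strictly convex on $[0,1]$, which is incompatible with $E(\gamma(0))=E(\gamma(1))=\min E$. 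Therefore the minimizer is unique, and by the existence argument it is the unique zero of $\nabla E$ in $B$. \textbf{The main obstacle} is the Hessian/Jacobi‑field comparison for $d^2$ and the sharpness of the constant $\pi/2\sqrt\epsilon$ (beyond which $\tfrac12 d^2$ already fails to be convex on the round sphere); the one genuinely delicate point is the borderline case $\mathrm{diam}(B)=\pi/2\sqrt\epsilon$, where the angular Hessian bound degenerates and strict convexity must be recovered either from the always‑positive radial term along a geodesic or by a limiting argument from the strict‑inequality case.
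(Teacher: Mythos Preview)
The paper does not prove this theorem at all: it is quoted as a result of Karcher, cited from \cite{K}, and used as a black box (together with the naturality of the center of mass under isometries, also cited from \cite{K}). So there is no ``paper's own proof'' to compare against.

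That said, your sketch is essentially the argument in Karcher's original paper: the analytic core is precisely the Hessian comparison for $\tfrac12 d^2(p,\cdot)$ obtained from Rauch/Jacobi-field estimates, giving (strict) geodesic convexity of each integrand on $B$ under the curvature and diameter hypotheses, from which existence (via compactness) and uniqueness (via strict convexity along geodesics) of the minimizer of $E$ follow. You have also correctly flagged the genuinely delicate point, namely the borderline $\mathrm{diam}(B)=\pi/2\sqrt\epsilon$, where the transverse Hessian bound degenerates and one has to extract strictness from the radial term or pass to a limit. Two small caveats worth tightening: you invoke Hopf--Rinow and an ``outward normal'' on $\partial B$, but the statement as quoted does not assume $M$ complete nor that $B$ has smooth boundary; both are easily arranged in the paper's intended application (where the ambient space $\mathbb{J}(V,g)$ is compact and $B$ is a metric ball), but in general one should either add these as standing hypotheses or replace the boundary argument by the observation that, by convexity of the integrand, moving from a boundary minimizer along the geodesic toward any point of $f(X)\subset B$ cannot increase $E$ and strictly decreases it unless the situation is degenerate.
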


Additionally, the center of mass is natural with respect to isometries (\cite{K}, 1.4.1). 
We refer the reader to \cite{Ch} Prop IX.7.1 for an exposition of center of mass. 

\begin{dfn} \label{dfn;delta2n}
For $n >1$ let 
\[
\delta_{2n} = \textrm{min} \Big\{ \frac{ \textrm{inj} \, \,  \mathbb{J}(V, g) }{2}, \frac{\pi}{ 4 \sqrt \epsilon} \Big\}
\]
where $V$ is a real vector space of dimension $2n$ with any metric $g$. Here we choose, once and for all, a finite positive upper bound $\epsilon$ on the sectional curvature of $\mathbb{J}(V, g)$ at one point, which by homogeneity works for all points. By Lemma \ref{lmm;WD}, $\delta_{2n}$ depends only on the dimension of $V$.
\end{dfn}

An interesting question (that we will not address here) is whether there is a positive lower bound for the set of all least such $\delta_{2n}$, independent of $n$.

\begin{dfn} A complex structure $J\in \mathbb{J}(V, g)$ is said to be \emph{nearly preserved} by a subgroup $H \subset O(V)$ if the orbit $HJ=\{\phi^{-1}J\phi ~|~ \phi \in H\}$ lies inside the ball $B(J, \delta_{2n})\subset  \mathbb{J}(V, g)$ with center $J$ and radius $\delta_{2n}$.
\end{dfn}

Recall that any closed subgroup $H$ of $O(V)$ is a compact Lie group, admitting a bi-invariant Haar measure, which is unique up to a constant. Therefore any such $H$ has a unique probability measure (of total mass equal to one).

\begin{prp} \label{prp;HJ=J}
Let $J\in \mathbb{J}(V, g)$ be nearly preserved by a closed subgroup $H$ of $O(V)$. Then there is a $J'\in \mathbb{J}(V, g)$ such that $HJ'=\{J'\}$.
\end{prp}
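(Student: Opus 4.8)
\medskip

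The plan is to obtain $J'$ as the Riemannian center of mass of the entire orbit $HJ$, averaged against the Haar probability measure of $H$, and then to argue that this center of mass is automatically fixed by $H$ because the whole configuration is $H$-symmetric. This is the ``center of mass argument'' alluded to in the introduction.

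In detail: let $m$ be the unique Haar probability measure on the compact group $H$, and let $f\colon H\to\mathbb{J}(V,g)$ be the orbit map $f(\phi)=\phi^{-1}J\phi$, which is smooth, hence measurable, with image exactly $HJ$. By hypothesis $f(H)=HJ$ lies in the ball $B=B(J,\delta_{2n})$, and by the convexity criterion recalled above together with the choice of $\delta_{2n}$ in Definition~\ref{dfn;delta2n} --- note $\delta_{2n}\le \mathrm{inj}\,\mathbb{J}(V,g)/2$ and $\delta_{2n}\le \pi/(4\sqrt\epsilon)$, so that $B$ is convex with diameter at most $2\delta_{2n}\le \pi/(2\sqrt\epsilon)$ --- Karcher's Theorem~\ref{thm;K} supplies a unique center of mass $J'\in B\subset\mathbb{J}(V,g)$, namely the unique minimizer on $B$ of $E(y)=\tfrac12\int_H d^2\bigl(f(\phi),y\bigr)\,dm(\phi)$. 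Since $J'\in\mathbb{J}(V,g)$ it is in particular a metric complex structure, and it remains only to show $\phi^{-1}J'\phi=J'$ for every $\phi\in H$, i.e.\ $HJ'=\{J'\}$.

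For the symmetry step, fix $\psi\in H$ and let $c_\psi\colon\mathbb{J}(V,g)\to\mathbb{J}(V,g)$, $c_\psi(K)=\psi^{-1}K\psi$, which is an isometry since $O(V)$ acts on $\mathbb{J}(V,g)$ by isometries via conjugation. A direct computation gives $c_\psi\circ f=f\circ R_\psi$, where $R_\psi(\phi)=\phi\psi$ is right translation on $H$; as Haar measure on a compact group is bi-invariant, $R_\psi$ preserves $m$, so $c_\psi\circ f$ and $f$ induce the same pushforward measure on $\mathbb{J}(V,g)$, hence the same energy functional $E$, hence the same center of mass $J'$ in $B$. On the other hand, naturality of the center of mass with respect to the isometry $c_\psi$ (\cite{K}, 1.4.1) identifies the center of mass of $c_\psi\circ f$ with $c_\psi(J')$. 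Because the center of mass depends only on the induced measure and lies in the closed convex hull of its support $HJ$ --- which $c_\psi$ maps to itself and on which the strictly convex $E$ has a single critical point --- the two descriptions agree, giving $c_\psi(J')=J'$. As $\psi\in H$ was arbitrary, $HJ'=\{J'\}$.

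The crux is this last step: ensuring the point produced by Karcher's theorem is canonical enough that the manifest $H$-symmetry of the data ($c_\psi$ preserves the averaging measure) forces $c_\psi$ to fix it; concretely, that passing from $B(J,\delta_{2n})$ to its isometric image $B(\psi^{-1}J\psi,\delta_{2n})$ does not alter which point is the center of mass. This is where one uses that the center of mass lands in the closed convex hull of the support --- a standard consequence of strict convexity of $E$ on convex balls together with nearest-point projection onto closed convex subsets --- and it is also the reason $\delta_{2n}$ is taken with the factor $\pi/(4\sqrt\epsilon)$ rather than the full convexity radius $\pi/(2\sqrt\epsilon)$. Everything else (measurability of $f$; convexity and diameter of $B(J,\delta_{2n})$; the identity $c_\psi\circ f=f\circ R_\psi$ and bi-invariance of $m$) is routine.

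\medskip
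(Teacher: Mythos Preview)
Your argument is correct and follows the same route as the paper: apply Karcher's theorem to the orbit map $H\to\mathbb{J}(V,g)$ equipped with Haar probability measure, then use naturality of the center of mass under isometries together with bi-invariance of $m$ to conclude that $H$ fixes $J'$. You are in fact more careful than the paper on one point, explicitly addressing why the center of mass does not depend on which containing convex ball $B(J,\delta_{2n})$ versus $B(\psi^{-1}J\psi,\delta_{2n})$ one works in (via the fact that it lies in the convex hull of the support), a subtlety the paper's terse proof glosses over.
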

\begin{proof}
Consider the orbit $HJ\subset B(J, \delta_{2n}) \subset \mathbb{J}(V, g)$. By assumption, 
$B(J, \delta_{2n})$ is a convex ball about $J$. 
%The orbit of any $h \in H$ acting on   $B(J, \delta_{2n})$ is again a convex ball that contains $HJ$. Therefore, the intersection $S$ of all of these orbits, over all $h \in H$, is also a convex set containing $HJ$, and $S$ is contained in $B(J, \delta_{2n})$. 
Consider the mapping $H \to HJ \subset \mathbb{J}(V, g)$, from the probability space $H$ onto its orbit. 
By Definition \ref{dfn;delta2n} and Theorem \ref{thm;K}, the set $HJ$ has a unique center of mass $J'$ in $S$.
Since the orbit of the action of $H$ on the set $HJ$ is itself, and the center of mass is unique and natural with respect to isometries given by the $H$-action, $H$ fixes $J'$. 
\end{proof}

Given a point $p\in M$, let $H_p=Im(\rho_p)$ be the image subgroup of the holonomy homomorphism $\rho_p:\Omega_p(M) \to O(T_pV)$, which is known to be closed by the Ambrose-Singer theorem. The following proposition is a standard result which we include for completeness.

\begin{prp} \label{prp;K}
Let $(M, g)$ be a connected Riemannian manifold and assume $J_p\in \mathbb{J}(T_pM, g)$ is invariant under the action of $H_p=Im(\rho_p)$. Then $M$ admits a unique almost complex structure $J: TM\to TM$ agreeing with $J_p$ and making $(M, g , J)$ into a K\"ahler manifold.  
\end{prp}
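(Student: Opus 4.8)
The plan is to build the global $J$ by parallel transport of $J_p$ using the Levi-Civita connection $\nabla$, to show that the holonomy hypothesis makes this construction independent of choices, and then to deduce that the resulting almost complex structure is parallel, from which the K\"ahler condition and uniqueness follow by standard arguments.

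First I would define, for each $q\in M$, the endomorphism $J_q = P_\gamma J_p P_\gamma^{-1}$, where $\gamma$ is any piecewise-smooth path from $p$ to $q$ (such a path exists since $M$ is connected) and $P_\gamma\colon T_pM\to T_qM$ is parallel transport along $\gamma$. If $\gamma'$ is a second such path, then $\gamma^{-1}\cdot\gamma'$ is a loop at $p$, so its holonomy lies in $H_p$ and by hypothesis fixes $J_p$; this forces $P_{\gamma'} J_p P_{\gamma'}^{-1} = P_\gamma J_p P_\gamma^{-1}$, so $J_q$ is well defined, and it visibly equals $J_p$ at $p$. Because $P_\gamma$ is a linear isometry and $J_p\in\mathbb{J}(T_pM,g)$, we get $J_q\in\mathbb{J}(T_qM,g)$, so $J$ is a metric almost complex structure; smoothness of $J$ follows from smoothness of parallel transport along a smooth family of paths (for instance a fixed path from $p$ into a normal neighborhood of $q$, followed by the radial geodesics).

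Next I would check $\nabla J = 0$. Given $q$ and $X\in T_qM$, pick a curve $\gamma$ with $\gamma(0)=q$, $\gamma'(0)=X$; the composition law for parallel transport together with the construction shows that $J_{\gamma(t)}$ is exactly the parallel transport of $J_q$ along $\gamma$, so differentiating at $t=0$ gives $(\nabla_X J)_q = 0$, and hence $\nabla J=0$. From $\nabla J=0$ and torsion-freeness of $\nabla$, the Nijenhuis tensor of $J$ vanishes, so $J$ is integrable by Newlander--Nirenberg; and the fundamental $2$-form $\omega = g(J\cdot,\cdot)$ satisfies $\nabla\omega=0$, hence $d\omega=0$, so $(M,g,J)$ is K\"ahler. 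Uniqueness is then immediate: any almost complex structure $\tilde J$ with $\tilde J_p = J_p$ making $(M,g,\tilde J)$ K\"ahler has $\nabla\tilde J=0$, so $\tilde J$ must be the parallel transport of $J_p$ and therefore equals $J$.

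The one genuinely substantive point is the path-independence in the first step, which is precisely where the $H_p$-invariance hypothesis enters; that is where I would be most careful. The remaining ingredients — smooth dependence of parallel transport on parameters, the identity expressing the Nijenhuis tensor through $\nabla J$ and the torsion, and the fact that $d$ on differential forms is the alternation of $\nabla$ — are standard and I would cite rather than reprove them, so I expect no real obstacle beyond stating them precisely.
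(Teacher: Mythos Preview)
Your proposal is correct and follows essentially the same route as the paper: define $J$ at each point by conjugating $J_p$ by parallel transport, use the $H_p$-invariance hypothesis for path-independence, conclude $\nabla J=0$, and then invoke the standard fact (which the paper cites from Kobayashi--Nomizu) that a parallel Hermitian almost complex structure is integrable and K\"ahler. Your write-up is in fact slightly more detailed than the paper's, since you spell out the Nijenhuis and $d\omega=0$ arguments and give the uniqueness step explicitly.
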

\begin{proof}
For any $q\in M$ define $J_q: T_q M\to T_qM$ by $P_\lambda \circ J_p \circ P_\lambda^{-1}$, where $P_\lambda: T_pM\to T_qM$ is the Riemannian parallel transport along any smooth path $\lambda$ in $M$ from $p$ to $q$. Since $M$ is connected, such a path $\lambda$ always exists and $J_q$ is independent of the choice of the path because $H_p J_p=\{J_p\}$. Thus, we have defined a smooth complex structure $J: TM\to TM$, and by way of construction, $J$ is compatible with all parallel transports, and therefore $\nabla J=0$. Since the Levi-Cevita connection $\nabla$ is torsion free, and $J$ is still Hermitian, $J$ is integrable; see Corollary 3.5 of \cite{KN}. Therefore, $(M, g, J)$ is a K\"ahler manifold. 
\end{proof}

\begin{cor}
With $(M,g)$ as above, if there is a $p\in M$ and a $J_p \in \mathbb{J}(T_pM, g)$ that is nearly preserved by $H_p=Im(\rho_p)$, then there is a $J': TM\to TM$ making $(M,g,J')$ into a K\"ahler manifold. 
\end{cor}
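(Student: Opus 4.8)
The plan is to chain the two preceding propositions together; the corollary is a formal consequence of them. First I would apply Proposition~\ref{prp;HJ=J} at the point $p$. By the Ambrose--Singer theorem the holonomy group $H_p = \mathrm{Im}(\rho_p)$ is a closed subgroup of $O(T_pM)$, and by hypothesis $J_p \in \mathbb{J}(T_pM, g)$ is nearly preserved by it, i.e. the orbit $H_p J_p$ lies in the convex ball $B(J_p, \delta_{2n})$. Since $(T_pM, g)$ is a $2n$-dimensional metric vector space, Proposition~\ref{prp;HJ=J} applies directly and produces a structure $J'_p \in \mathbb{J}(T_pM, g)$ — namely the center of mass of the orbit with respect to the probability Haar measure on $H_p$ — satisfying $H_p J'_p = \{J'_p\}$, so that $J'_p$ is fixed by the holonomy action.

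Next I would feed this $H_p$-invariant structure into Proposition~\ref{prp;K}, applied to the connected Riemannian manifold $(M, g)$ together with the invariant $J'_p \in \mathbb{J}(T_pM, g)$. That proposition yields a global almost complex structure $J' \colon TM \to TM$, obtained by parallel transporting $J'_p$ along arbitrary smooth paths — well defined precisely because $J'_p$ is holonomy-invariant — which agrees with $J'_p$ at $p$ and makes $(M, g, J')$ into a K\"ahler manifold. This is exactly the asserted conclusion.

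There is essentially no obstacle: the only points worth recording are bookkeeping ones. One should note that $\delta_{2n}$ was defined (Definition~\ref{dfn;delta2n}) so that $B(J_p, \delta_{2n})$ is genuinely convex and has diameter at most $\pi/2\sqrt{\epsilon}$, which is what makes Karcher's Theorem~\ref{thm;K} — and hence Proposition~\ref{prp;HJ=J} — applicable inside it; and that the closedness of $H_p$ is what guarantees it is a compact Lie group carrying the probability measure used in the averaging argument.
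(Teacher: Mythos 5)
Your argument is correct and is exactly the paper's proof: apply Proposition~\ref{prp;HJ=J} to replace $J_p$ by a holonomy-invariant $J'_p$, then apply Proposition~\ref{prp;K} to extend it by parallel transport to a K\"ahler structure. The extra bookkeeping you record (closedness of $H_p$, convexity of $B(J_p,\delta_{2n})$) is already built into the hypotheses of those propositions, so nothing further is needed.
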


\begin{proof} This follows from Proposition \ref{prp;HJ=J} and Proposition \ref{prp;K}.
\end{proof}

\begin{thm}
There is a sequence of positive numbers $\delta_{2n}\in \mathbb R$ such that for any connected $2n$-dimensional Riemannian manifold $M$, one of the two following mutually exclusive properties hold:

\begin{enumerate}
\item[]
\item There is a complex structure on $M$ making it into a K\"ahler manifold.
\item For any almost complex structure $J$ compatible with the metric, at every point $p\in M$, there is a smooth loop $\gamma$ at $p$ such that $$dist(J_p, hol_\gamma^{-1}J_phol_\gamma)> \delta_{2n}.$$
\end{enumerate}
\end{thm}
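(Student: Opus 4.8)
The plan is to derive the theorem as an immediate consequence of the machinery already assembled, essentially by a logical dichotomy argument. The two stated alternatives are built to be mutually exclusive and exhaustive, so the real content is the implication: if alternative (2) fails, then alternative (1) holds.

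\medskip

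First I would unwind what the negation of (2) says. Failure of (2) means there exists an almost complex structure $J$ compatible with the metric, and a point $p\in M$, such that for \emph{every} smooth loop $\gamma$ at $p$ one has $dist(J_p, hol_\gamma^{-1}J_p hol_\gamma)\le \delta_{2n}$. Here $J_p\in\mathbb{J}(T_pM,g)$, and as $\gamma$ ranges over all smooth loops at $p$, the elements $hol_\gamma$ range over the holonomy group $H_p=Im(\rho_p)\subset O(T_pM)$, so the conjugates $hol_\gamma^{-1}J_p hol_\gamma$ range precisely over the orbit $H_pJ_p$. Thus the negation of (2) is exactly the statement that $H_pJ_p\subset B(J_p,\delta_{2n})$, i.e.\ that $J_p$ is nearly preserved by $H_p$ in the sense of the definition above. (One small point to address: the definition of \emph{nearly preserved} is phrased for subgroups of $O(V)$ with $V$ an abstract metric vector space, but by Lemma \ref{lmm;WD} and Definition \ref{dfn;delta2n} the constant $\delta_{2n}$ is the same for $(T_pM,g)$, so this transfers verbatim.)

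\medskip

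Given this, the Corollary immediately preceding the theorem applies: since there is a $p\in M$ and a $J_p\in\mathbb{J}(T_pM,g)$ nearly preserved by $H_p$, there is a complex structure $J'$ on $M$ making $(M,g,J')$ a K\"ahler manifold. That is precisely alternative (1). Hence $\neg(2)\Rightarrow(1)$, so at least one of (1), (2) holds. For mutual exclusivity, suppose (1) holds, witnessed by a K\"ahler structure with integrable complex structure $\mathcal J$; then $\mathcal J$ is parallel, so $\nabla \mathcal J=0$, which forces $hol_\gamma^{-1}\mathcal J_p hol_\gamma=\mathcal J_p$ for every loop $\gamma$ at every $p$. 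Taking $J=\mathcal J$ as the compatible almost complex structure shows that (2) fails (the distance is $0\not>\delta_{2n}$). Therefore (1) and (2) cannot hold simultaneously, and the dichotomy is established.

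\medskip

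I do not anticipate a serious obstacle here, since all the analytic work (compactness of $\mathbb{J}(V,g)$, the Karcher center of mass, the convexity radius estimate, and the integrability-from-parallelism step via \cite{KN}) has already been carried out in the lemmas and propositions above; the theorem is a repackaging. The one place requiring a little care is the bookkeeping identification ``set of conjugates by holonomy along smooth loops $=$ orbit $H_pJ_p$'': one should note that $H_p$ is by definition the image of $\rho_p$ on \emph{all} loops (and is closed by Ambrose--Singer as already invoked), and that conjugation $J\mapsto \phi^{-1}J\phi$ is exactly the $O(T_pM)$-action used throughout, so the orbit under $H_p$ coincides with the set of $hol_\gamma^{-1}J_p hol_\gamma$. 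With that identification in hand, the proof is a two-line deduction from the Corollary together with the elementary observation that a K\"ahler complex structure has trivial holonomy action on itself.
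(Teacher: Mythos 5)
Your proposal is correct and follows the same route as the paper, which simply notes that $(1)\Rightarrow\neg(2)$ is clear and that $\neg(2)\Rightarrow(1)$ is the preceding corollary; you have merely spelled out the quantifier negation, the identification of the set of holonomy conjugates with the orbit $H_pJ_p$, and the parallelism argument for exclusivity, all of which are the intended (if unstated) details.
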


\begin{proof} If $1)$ is true then $2)$ is clearly false, and the converse follows from the previous corollary.
\end{proof}

\bibliographystyle{plain}

\end{document}